\newtheorem{Definition}{Definition}[subsection]
\newtheorem{Theorem}[Definition]{Theorem}
\newtheorem{Lemma}[Definition]{Lemma}
\newtheorem{Example}[Definition]{Example}
\newtheorem{Remark}[Definition]{Remark}
\newtheorem{Proposition}[Definition]{Proposition}
\newcommand{\Ass}{{\mathop{\mathrm{Ass}}}}
\begin{document}
\title{Primary Decomposition of Symmetric Ideals}

\author{Yuki Ishihara\thanks{Tokyo University of Science, yishihara@rs.tus.ac.jp}}
\date{}

\maketitle             

\begin{abstract}
We propose an effective method for primary decomposition of symmetric ideals. Let $K[X]=K[x_1,\ldots,x_n]$ be the $n$-valuables polynomial ring over a field $K$ and $\mathfrak{S}_n$ the symmetric group of order $n$. We consider the canonical action of $\mathfrak{S}_n$ on $K[X]$ i.e. $\sigma(f(x_1,\ldots,x_n))=f(x_{\sigma(1)},\ldots,x_{\sigma(n)})$ for $\sigma\in \mathfrak{S}_n$. For an ideal $I$ of $K[X]$, $I$ is called {\em symmetric} if $\sigma(I)=I$ for any $\sigma\in \mathfrak{S}_n$. For a minimal primary decomposition $I=Q_1\cap \cdots \cap Q_r$ of a symmetric ideal $I$, $\sigma(I)=\sigma (Q_1)\cap \cdots \cap \sigma(Q_r)$ is a minimal primary decomposition of $I$ for any $\sigma\in \mathfrak{S}_n$. We utilize this property to compute a full primary decomposition of $I$ efficiently from partial primary components. We investigate the effectiveness of our algorithm by implementing it in the computer algebra system Risa/Asir.

\end{abstract}

%======================================
\section{Introduction}
\label{sec:introduction}
%======================================
Algebraic structures with symmetry are often treated in mathematics. For example, symmetric polynomials and ideals generated by them appear in invariant theory and Galois theory. For analyzing the structure of ideals, primary decompositions are well-known as one of the useful tools. In this paper, we prove good properties of primary decompositions of symmetric ideals and propose an effective algorithm for primary decompositions of those ideals. 

For a proper ideal $I$ of $K[x_1,\ldots,x_n]$, {\em a primary decomposition} of $I$ is a set of primary ideals $\{Q_1,\ldots,Q_k\}$ such that $I=Q_1\cap \cdots \cap Q_k$. Several algorithms for primary decomposition have been studied in \cite{Eisenbud1992,GIANNI1988149,KAWAZOE20111158,SHIMOYAMA1996247} and the algorithms are mainly based on Gr\"obner basis computations. However, Gr\"obner bases are incompatible with the symmetry in general, thus a specialized algorithm utilizing the symmetry can be effective for symmetric ideals.  

For a polynomial $f(x_1,\ldots,x_n)$ over a filed $K$, $f$ is called {\em symmetric} if $f(x_1,\ldots,x_n)=f(x_{\sigma(1)},\ldots,x_{\sigma(n)})$ for any permutation $\sigma$ over $\{1,2,\ldots,n\}$. For example, $f(x_1,x_2)=x_1^2+x_1x_2+x_2^2$ is a symmetric polynomial. The factorization of a symmetric polynomial has also a symmetric structure. For a factorization $f=g_1^{e_1}\cdots g_k^{e_k}$ of a symmetric polynomial $f$ with irreducible polynomials $g_i$, $f=\sigma(g_1)^{e_1}\cdots \sigma(g_k)^{e_k}$ is also a factorization of $f$ for any permutation $\sigma$ over $\{1,2,\ldots,n\}$. In other words, the symmetric group $\mathfrak{S}_n$ of degree $n$ acts on $\{g_1,\ldots,g_k\}$. Hence, we can define the equivalent classes $C_1,\ldots,C_l$ of $\{g_1,\ldots,g_k\}$ with respect to the equivalent relation $\sim$ where $g_i\sim g_j$ if and only if $\sigma(g_i)=g_j$ for some $\sigma\in \mathfrak{S}_n$. In each class $C_i$, we can compute other factors of $C_i$ from one factor of $C_i$ i.e. $C_i=\{\sigma (g_i)\mid \sigma\in \mathfrak{S}_n\}$ for $g_i\in C_i$. Therefore, we can reduce the computation for the factorization by the group action. 

We consider such symmetric structures in the ideals as well. For an ideal of $K[x_1,\ldots,,x_n]$, $I$ is called {\em symmetric} if $\sigma(I)=I$ for any permutation $\sigma$ over $\{1,2,\ldots,n\}$. For a minimal primary decomposition $\mathcal{Q}=\{Q_1,\ldots,Q_k\}$ of $I$, $\sigma (\mathcal{Q})=\{\sigma(Q_1),\ldots,\sigma(Q_k)\}$ is also a minimal primary decomposition of $I$. However, $\mathfrak{S}_n$ does not always act on $\mathcal{Q}=\{Q_1,\ldots,Q_k\}$ in general since primary decompositions of $I$ are not necessarily unique. In order to solve this problem, we introduce a notion of ``the quotient set of primary components of $I$'' which is uniquely determined from $I$. For the quotient set of primary components $\mathcal{Q}[I]=\{\mathcal{Q}_{P_1}[I],\ldots,\mathcal{Q}_{P_k}[I]\}$ of $I$, we can define the equivalent classes $C_1,\ldots,C_l$ of $\mathcal{Q}[I]$ with respect to the equivalent relation $\sim$ where $\mathcal{Q}_{P_i}[I]\sim \mathcal{Q}_{P_j}[I]$ if and only if $\sigma(\mathcal{Q}_{P_i}[I])=\mathcal{Q}_{P_j}[I]$ for some $\sigma\in \mathfrak{S}_n$. In each class $C_i$, we can compute other classes of primary components in $C_i$ from one class of a primary component in $C_i$. We say that $\{C_1,\ldots,C_l\}$ is {\em the orbit decomposition} of $I$. As in the case of symmetric polynomials, we can reduce the computation for the primary decomposition. For practical computations, we also consider other symmetric properties of the quotient set of primary components. We implemented the algorithm in the computer algebra system Risa/Asir. In a naive computer experiment, we examine its effectiveness in several examples. 

This paper is organized as follows. In Section~\ref{sec:mathematicalbasis}, we recall some fundamental notions and definitions for symmetric ideals and primary decompositions. In Section~\ref{sec:PrimaryDecompositionofSymmetricIdeals}, we introduce the quotient set of primary components of a symmetric ideal for effective primary decompositions. In Section~\ref{sec:ImprovementsforSymmetricPrimaryDecomposition}, we provide some improvements for symmetric primary decompositions toward practical algorithms. In Section~\ref{sec:Experiment}, we implement our algorithm in the computer algebra system Risa/Asir \cite{risaasir} and examine the effectiveness of our algorithm in several examples. In Section~\ref{sec:ConclusionsandFutureWorks}, we summarize the conclusion and discuss future works. 

%===============================
\section{Mathematical Basis}
\label{sec:mathematicalbasis}
%===============================

We let $X=\{x_1,\ldots,x_n\}$ be a set of $n$-valuables and $K[X]$ the polynomial ring over a field $K$. Also, let $\mathfrak{S}_n$ be the symmetric group of degree $n$ and $\phi:\mathfrak{S}_n\times K[X]\to K[X]$ the canonical action of $\mathfrak{S}_n$ on $K[X]$ such that $\sigma (f(x_1,\ldots,x_n))=f(x_{\sigma(1)},\ldots,\sigma_{\sigma(n)})$ for $\sigma\in \mathfrak{S}_n$ and $f\in K[X]$. Here, for distinct $i_1,\ldots,i_k\in \{1,\ldots,n\}$, $(i_1 \, i_2\, i_3 \,\cdots\, i_k)$ is a permutation such that $i_1\mapsto i_2$, $i_2\mapsto i_3$, $\ldots$ $,i_{k-1}\mapsto i_k$, $i_k\mapsto i_1$. For polynomials $f_1,\ldots,f_l$, we denote by $\langle f_1,\ldots,f_l\rangle$ the ideal generated by them. For an ideal $I$, we call $\{f\in K[X]\mid f^m \in I \text{ for a positive integer $m$}\}$ the radical of $I$ and denote it by $\sqrt{I}$. Also, we call $I:J=\{f\mid fJ\subset I\}$ and $I\colon J^\infty=\{f\mid f^mJ\subset I, \text{ for a positive integer $m$}\}$ {\em the quotient} and {\em the saturation} of ideals $I$ and $J$ respectively.

\subsection{Symmetric Ideal}

First, we introduce a notion of {\em symmetric ideal}. 

\begin{Definition}
    For an ideal $I$ of $K[X]$, $I$ is called {\em symmetric} if $\sigma(I)=I$ for any $\sigma\in \mathfrak{S}_n$, where $\sigma(I)=\{\sigma(f) \mid f \in I \}$. 
\end{Definition}

\begin{Example}
    $I=\langle x_1^2-x_2^2,x_1x_2\rangle\subset K[x_1,x_2]$ is a symmetric ideal. 
\end{Example}

\begin{Remark}\label{rem:symcon}
    Since $\sigma$ is invertible, $\sigma(I)\subset I$ implies $I\subset \sigma^{-1}(I)$. Thus, the condition $\sigma(I)=I$ can be replaced by $\sigma(I)\subset I$. 
\end{Remark}

We generalize the term ``symmetric ideal'' and consider an action by a subgroup of $\mathfrak{S}_n$ on $I$. 

\begin{Definition}
    For an ideal $I$ of $K[X]$ and a subgroup $G$ of $\mathfrak{S}_n$, $I$ is called {\em $G$-invariant} if $\sigma(I)=I$ for any $\sigma\in G$.
\end{Definition}

\begin{Example}
    For $G=\{(1),(1 \, 2\, 3),(1 \, 3\, 2)\}\subset \mathfrak{S}_3$ and $I=\langle x_1x_2,x_2x_3,x_3x_1\rangle\subset K[x_1,x_2,x_3]$, $I$ is a $G$-invariant ideal. 
\end{Example}

\begin{Remark}
   Here, symmetric ideals are not necessarily generated from symmetric polynomials. For example, $I=\langle x-y \rangle$ is generated by non-symmetric polynomial $x-y$ but it is a symmetric ideal.
\end{Remark}

Next, we recall a definition of primary decomposition and prime divisors of an ideal as follows. 

\begin{Definition}
    Let $I$ be a proper ideal of $K[X]$. A set $\{Q_1,\ldots,Q_k\}$ of primary ideals is called {\em a primary decomposition of $I$} if 
    \[
    I=Q_1\cap \cdots \cap Q_k.
    \]
    A primary decomposition $\{Q_1,\ldots,Q_k\}$ of $I$ is called {\em minimal} or {\em irredundant} if $\sqrt{Q_i}\neq \sqrt{Q_j}$ for any pair $(i,j)$ with $i \neq j$ and $Q_i\not \supset \bigcap_{j\neq i} Q_j$ for any $i$. Each $Q_i$ is called {\em a ($\sqrt{Q_i}$-)primary component} of $I$ and $\sqrt{Q_i}$ is called {\em a prime divisor} or {\em an associated prime} of $I$. 
\end{Definition}

We denote by $\Ass (I)$ the set of prime divisors of $I$ i.e. $\Ass (I)=\{\sqrt{Q_1},\ldots,\sqrt{Q_k}\}$ for a minimal primary decomposition $\{Q_1,\ldots,Q_k\}$ of $I$. In the set of prime divisors, prime divisors which are minimal under set inclusion are called {\em isolated}, others are called {\em embedded} respectively. 

\begin{Remark}
    A minimal primary decomposition of $I$ is not always unique in general. For instance, $I=\langle x_1^2,x_1x_2\rangle$ has infinitely many primary decompositions of type $\{\langle x_1\rangle,\langle x_1^2,x_1x_2,x_2^m\rangle\}$ for any positive integer $m$. In Section~\ref{sec:PrimaryDecompositionofSymmetricIdeals}, we define the set of all $P$-primary components with respect to a prime divisor $P$ to avoid the non-uniqueness of primary decompositions. 
\end{Remark}

%=================================================
\subsection{Criteria for Symmetric Ideal}
%=================================================

In order to check whether a given ideal is symmetric or not, one can utilize a Gr\"obner basis of the ideal. For permutations $\sigma_1,\ldots,\sigma_l$ of $\mathfrak{S}_n$, we denote by $\langle\langle \sigma_1,\ldots,\sigma_l\rangle\rangle$ the subgroup generated by them. 

\begin{Lemma}
    Let $I=\langle f_1,\ldots,f_k\rangle$ be an ideal of $K[X]$ and $G$ a subgroup of $\mathfrak{S}_n$. Then, $I$ is $G$-invariant if and only if $\sigma(f_i)\in I$ for any $i\in \{1,\ldots,k\}$ and $\sigma\in G$. In particular, one can check whether a given ideal $I$ is $G$-invariant or not.   
\end{Lemma}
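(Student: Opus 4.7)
The plan is to prove the two directions of the equivalence separately and then address the algorithmic claim. The forward direction is immediate: if $I$ is $G$-invariant, then for every $\sigma\in G$ one has $\sigma(I)=I$, so each $\sigma(f_i)$, being an element of $\sigma(I)$, lies in $I$.

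For the converse direction, the key observation is that the action of each $\sigma\in \mathfrak{S}_n$ extends to a $K$-algebra automorphism of $K[X]$; this follows since $\sigma$ permutes the variables and fixes $K$. Concretely, given an arbitrary $f\in I$, I would write $f=\sum_{i=1}^k g_i f_i$ with $g_i\in K[X]$, and then use the ring homomorphism property of $\sigma$ to obtain
\[
\sigma(f)=\sum_{i=1}^k \sigma(g_i)\,\sigma(f_i).
\]
By hypothesis each $\sigma(f_i)\in I$, while $\sigma(g_i)\in K[X]$, so the right-hand side lies in $I$. Hence $\sigma(I)\subset I$, and by Remark~\ref{rem:symcon} this already implies $\sigma(I)=I$. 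Since this holds for every $\sigma\in G$, the ideal $I$ is $G$-invariant.

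For the final assertion, checking $G$-invariance reduces to finitely many ideal-membership tests. Given a set of generators $\sigma_1,\ldots,\sigma_m$ of $G$, one observes that the set of permutations satisfying $\sigma(I)\subset I$ is closed under composition and, by Remark~\ref{rem:symcon}, under inverses; therefore it suffices to verify the condition $\sigma_j(f_i)\in I$ for $1\le i\le k$ and $1\le j\le m$. Each such membership test can be carried out using a Gr\"obner basis of $I$, which makes the criterion fully effective.

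There is no real obstacle here: the only nontrivial ingredient is that $\sigma$ is a ring automorphism of $K[X]$ (so it respects $K[X]$-linear combinations of the generators), and the reduction from ``all $\sigma\in G$'' to ``generators of $G$'' for the algorithmic statement, which follows from the submonoid argument combined with Remark~\ref{rem:symcon}.
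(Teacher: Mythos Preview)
Your argument is correct and mirrors the paper's proof: the forward implication is immediate, and for the converse you write $f=\sum_i g_i f_i$, apply the ring automorphism $\sigma$, and invoke Remark~\ref{rem:symcon}, exactly as the paper does. Your final paragraph's reduction to a generating set of $G$ goes beyond what this lemma requires (it is precisely the content of the paper's next lemma), and closure under inverses really comes from finiteness of $\mathfrak{S}_n$ (a submonoid of a finite group is a subgroup) rather than from Remark~\ref{rem:symcon}, but neither point affects correctness.
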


\begin{proof}
    If $I$ is $G$-invariant, then it is obvious that $\sigma( f_i)\in I$ for any $i$ and $\sigma\in G$. Suppose that $\sigma(f_i)\in I$ for any $i$ and $\sigma\in G$. Then, for $f\in I$, there exist $h_1,\ldots,h_k\in K[X]$ such that $f=h_1f_1+\cdots +h_kf_k$. Since $\sigma (f)=\sigma(h_1)\sigma(f_1)+\cdots +\sigma(h_k)\sigma(f_k)\in \langle \sigma(f_1),\ldots,\sigma(f_k)\rangle\subset I$, we obtain that $\sigma(I)\subset I$ and $I$ is $G$-invariant by Remark~\ref{rem:symcon}. In order to check whether $f\in I$ or $f\not \in I$, one can use a Gr\"obner basis of $I$ with respect to a monomial ordering on $K[X]$. 
\end{proof}

One can check if $I$ is $G$-invariant or not more easily by using a generating set of $G$ as follows. 

\begin{Lemma}
    Let $I=\langle f_1,\ldots,f_k\rangle$ be an ideal of $K[X]$ and $G$ be a subgroup of $\mathfrak{S}_n$. If $G$ is generated by $\sigma_1,\ldots,\sigma_l$ then $I$ is $G$-invariant if and only if $\sigma_j(f_i)\in I$ for any $i\in \{1,\ldots,k\}$ and $j\in \{1,\ldots,l\}$. 
\end{Lemma}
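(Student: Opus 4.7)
The plan is to reduce to the preceding lemma by exploiting the fact that the set of permutations stabilizing $I$ forms a subgroup of $\mathfrak{S}_n$. The forward direction is immediate from the definition: if $I$ is $G$-invariant then $\sigma_j(I)=I$ for each generator, so in particular $\sigma_j(f_i)\in I$ for every $i$. For the converse, my first step is to apply the preceding lemma to each cyclic subgroup $\langle\langle \sigma_j\rangle\rangle$ separately: the hypothesis $\sigma_j(f_i)\in I$ for every $i$ is exactly the criterion of that lemma, so $I$ is $\langle\langle \sigma_j\rangle\rangle$-invariant, i.e., $\sigma_j(I)=I$ for each $j$; Remark~\ref{rem:symcon} also yields $\sigma_j^{-1}(I)=I$.

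Next, I would consider the set $H:=\{\sigma\in \mathfrak{S}_n\mid \sigma(I)=I\}$ and verify that it is a subgroup of $\mathfrak{S}_n$. It contains the identity; it is closed under composition, since $(\sigma\tau)(I)=\sigma(\tau(I))=\sigma(I)=I$; and it is closed under inversion by Remark~\ref{rem:symcon}. Since $H$ contains every generator $\sigma_1,\ldots,\sigma_l$ of $G$, it must contain all of $G$. Therefore $\sigma(I)=I$ for every $\sigma\in G$, i.e., $I$ is $G$-invariant.

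No step here looks like a genuine obstacle. The crux is the purely group-theoretic observation that ``stabilizing $I$'' is preserved by composition and inversion, which reduces $G$-invariance to a check on a generating set. Once the previous lemma is in hand, the argument is short and mechanical.
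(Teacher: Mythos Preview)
Your overall strategy is correct and essentially matches the paper's: both arguments first pass from $\sigma_j(f_i)\in I$ to $\sigma_j(I)\subset I$ (hence $\sigma_j(I)=I$), and then extend from the generators to all of $G$. The paper does the extension by writing an arbitrary $\sigma\in G$ as a word $\sigma_{i_1}\cdots\sigma_{i_m}$ in the generators and iterating $\sigma(f)=\sigma_{i_1}(\cdots(\sigma_{i_m}(f)))\in I$; you package the same step as ``the stabilizer $H=\{\sigma:\sigma(I)=I\}$ is a subgroup containing the generators''. These are equivalent formulations.

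One small imprecision: you write that ``$\sigma_j(f_i)\in I$ for every $i$ is exactly the criterion of that lemma'' for $\langle\langle\sigma_j\rangle\rangle$-invariance. Taken literally, the preceding lemma applied to $G=\langle\langle\sigma_j\rangle\rangle$ would require $\sigma_j^m(f_i)\in I$ for \emph{all} powers $m$, not just $m=1$. What you actually need (and what the first lemma's proof gives for a single permutation) is that $\sigma_j(f_i)\in I$ for all $i$ implies $\sigma_j(I)\subset I$ by linearity, and then Remark~\ref{rem:symcon} upgrades this to $\sigma_j(I)=I$. With that wording fixed, your argument goes through without change.
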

\begin{proof}
    It is obvious that $\sigma_j(f_i)\in I$ if $I$ is $G$-invariant. Suppose that $\sigma_j(f_i)\in I$ for any $i\in \{1,\ldots,k\}$ and $j\in \{1,\ldots,l\}$. For $\sigma\in G$, there exist $\sigma_{i_1},\ldots,\sigma_{i_m}$ $(1\le i_1,\ldots,i_m\le l)$ such that $\sigma=\sigma_{i_1}\cdots\sigma_{i_m}$. Thus, for any $\sigma\in G$ and $f\in I$, it follows that $\sigma(f)=\sigma_{i_1}(\cdots(\sigma_{i_m}(f)))\in I$ and $\sigma(I)\subset I$. 
\end{proof}

\begin{Example}
    Since $\mathfrak{S}_n$ is generated by $(1 \, 2)$ and $(1 \, 2\, 3\, \cdots n)$, $I$ is symmetric if and only if $(1 \, 2) (I)=I$ and $(1 \, 2\, 3\, \cdots n) (I)=I$. 
\end{Example}

Finally, we obtain a decision algorithm for $G$-invariant ideals as Algorithm~\ref{alg:1}. 

\begin{algorithm}                      
\caption{\textsc{IsInvariantIdeal}}                                 
\begin{algorithmic}        \label{alg:1}
\REQUIRE $\{f_1,\ldots,f_k\}$: a set of polynomials. $\sigma_1,\ldots,\sigma_l$: permutations of $\mathfrak{S}_n$
\ENSURE 1 if $\langle f_1,\ldots,f_k\rangle$ is $\langle\langle \sigma_1,\ldots,\sigma_l\rangle\rangle$-invariant; 0 otherwise
\FOR{$i=1$ to $k$}
    \FOR{$j=1$ to $l$}
        \IF{$\sigma_j(f_i)\not \in I$}
            \RETURN 0
        \ENDIF
    \ENDFOR
\ENDFOR
\RETURN 1
\end{algorithmic}
\end{algorithm}

%===============================
\section{Primary Decomposition of Symmetric Ideals}
\label{sec:PrimaryDecompositionofSymmetricIdeals}
%===============================

In this section, we reveal a symmetric structure in primary divisors of a symmetric ideal. We remark that $\sigma\in \mathfrak{S}_n$ is an automorphism of $K[X]$, and thus $P$ is a prime ideal if and only if $\sigma(P)$ is a prime ideal. Also, $Q$ is a primary ideal if and only if $\sigma(Q)$ is a primary ideal. Similarly, other algebraic property of an ideal $I$ holds for $\sigma (I)$. In addition, $\sigma$ is commutative with many ideal operations, for example, $\sigma(I\cap J)=\sigma(I)\cap \sigma (J)$ and $\sigma(I+J)=\sigma(I)+\sigma(J)$. 

\subsection{Group Action on Primary Components}

First, we show that primary decomposition is commutative with a group action. 

\begin{Lemma}\label{lem:minsym}
    Let $I$ be a $G$-invariant ideal and $\sigma\in G$. For a primary decomposition $\mathcal{Q}=\{Q_1,\ldots,Q_k\}$ of $I$, $\sigma(\mathcal{Q})=\{\sigma(Q_1),\ldots,\sigma(Q_k)\}$ is also a primary decomposition of $I$. If $\mathcal{Q}$ is minimal, then $\sigma(\mathcal{Q})$ is also minimal. 
\end{Lemma}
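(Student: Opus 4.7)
The plan is to exploit the fact, noted just before the lemma, that $\sigma \in G \subset \mathfrak{S}_n$ is an automorphism of $K[X]$ commuting with all basic ideal operations, together with the defining property $\sigma(I)=I$ of $G$-invariance.

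First I would establish the decomposition identity. Apply $\sigma$ to both sides of $I = Q_1 \cap \cdots \cap Q_k$; since $\sigma$ commutes with finite intersection of ideals, this gives $\sigma(I) = \sigma(Q_1) \cap \cdots \cap \sigma(Q_k)$. Because $I$ is $G$-invariant, $\sigma(I)=I$, so $I = \sigma(Q_1) \cap \cdots \cap \sigma(Q_k)$. It then remains to check that each $\sigma(Q_i)$ is primary, which is immediate from the remark preceding the lemma that being primary is preserved under $\sigma$ (since $\sigma$ is a $K$-algebra automorphism). This settles the first assertion.

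Next I would handle minimality. Assume $\mathcal{Q}$ is minimal, and verify the two conditions in the definition for $\sigma(\mathcal{Q})$. For the radicals, I use the identity $\sqrt{\sigma(Q_i)} = \sigma(\sqrt{Q_i})$ (another instance of $\sigma$ commuting with a standard ideal operation). If $\sqrt{\sigma(Q_i)} = \sqrt{\sigma(Q_j)}$ for $i \neq j$, applying $\sigma^{-1}$ yields $\sqrt{Q_i}=\sqrt{Q_j}$, contradicting minimality of $\mathcal{Q}$. For the irredundancy condition, suppose $\sigma(Q_i) \supset \bigcap_{j\neq i}\sigma(Q_j)$; again since $\sigma$ commutes with intersection and is inclusion-preserving (being an automorphism), applying $\sigma^{-1}$ gives $Q_i \supset \bigcap_{j\neq i}Q_j$, again contradicting minimality.

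There is essentially no hard step here: the whole proof is a routine transport-of-structure argument that leans on two facts about $\sigma$, namely that it is a $K$-algebra automorphism (so preserves primary, prime, and radical) and that it commutes with $\cap$ and with $\sqrt{\;}$. The only small care needed is to invoke $\sigma(I)=I$ exactly once, at the moment one replaces $\sigma(I)$ by $I$ on the left-hand side of the decomposition; everything else follows by applying $\sigma$ or $\sigma^{-1}$ to presumed contradictions.
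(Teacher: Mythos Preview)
Your proof is correct and follows essentially the same route as the paper's own argument: apply $\sigma$ to the intersection, use $\sigma(I)=I$, note that $\sigma$ preserves primariness, and then check the two minimality conditions via $\sqrt{\sigma(Q_i)}=\sigma(\sqrt{Q_i})$ and commutativity of $\sigma$ with intersections. The only cosmetic difference is that the paper states the inequalities directly rather than phrasing them as contradictions via $\sigma^{-1}$.
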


\begin{proof}
    Since $\sigma$ is commutative with the ideal intersection, 
    \[
    I=\sigma(I)=\sigma(Q_1)\cap \cdots\cap\sigma(Q_k).
    \]
    Here, each $\sigma(Q_i)$ is primary and thus $\sigma(\mathcal{Q})$ is a primary decomposition of $I$. If $\mathcal{Q}$ is minimal then $\sqrt{\sigma(Q_i)}=\sigma(\sqrt{Q_i})\neq \sigma(\sqrt{Q_j})=\sqrt{\sigma(Q_j)}$ for any pair $(i,j)$ with $i \neq j$ and $\sigma(Q_i)\not \supset \sigma\left(\bigcap_{j\neq i} Q_j\right)=\bigcap_{j\neq i} \sigma(Q_j)$ for any $i$. Thus, $\sigma(\mathcal{Q})$ is also minimal. 
\end{proof}

Next, we prove that the set of prime divisors of a symmetric ideal has also a symmetric structure.   

\begin{Proposition} \label{prop:assact}
    Let $I$ be a $G$-invariant ideal and $\Ass (I)=\{P_1,\ldots,P_k\}$. Then, $G$ acts on $\Ass (I)$ by $\sigma(P_i)$ for $\sigma\in G$ and $P_i\in \Ass (I)$. 
\end{Proposition}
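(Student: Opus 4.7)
The plan is to reduce the claim to Lemma~\ref{lem:minsym} and the well-definedness of $\Ass(I)$. Concretely, to see that the formula $\sigma\cdot P_i := \sigma(P_i)$ gives a well-defined action of $G$ on $\Ass(I)$, I need to check two things: (a) $\sigma(P_i)\in \Ass(I)$ for every $\sigma\in G$ and every $P_i\in \Ass(I)$, so the formula lands in the right set, and (b) the usual group action axioms $e(P_i)=P_i$ and $(\sigma\tau)(P_i)=\sigma(\tau(P_i))$ hold, which is immediate from the fact that $\phi$ is a group action of $\mathfrak{S}_n$ on $K[X]$ (so in particular on ideals).

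The substantive step is (a). First I would fix a minimal primary decomposition $\mathcal{Q}=\{Q_1,\ldots,Q_k\}$ of $I$ with $\sqrt{Q_i}=P_i$, so $\Ass(I)=\{P_1,\ldots,P_k\}$. By Lemma~\ref{lem:minsym}, applied to the given $\sigma\in G$, the set $\sigma(\mathcal{Q})=\{\sigma(Q_1),\ldots,\sigma(Q_k)\}$ is again a minimal primary decomposition of $I$. Because taking radicals commutes with the ring automorphism $\sigma$, the radicals of this new decomposition are $\sqrt{\sigma(Q_i)}=\sigma(\sqrt{Q_i})=\sigma(P_i)$. Since $\Ass(I)$ is determined by $I$ alone (the set of radicals appearing in any minimal primary decomposition agrees — this is the standard uniqueness of associated primes), it follows that $\{\sigma(P_1),\ldots,\sigma(P_k)\}=\Ass(I)$. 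In particular $\sigma(P_i)\in \Ass(I)$, establishing (a).

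For (b), the identity permutation fixes every polynomial, hence every ideal, so $e(P_i)=P_i$. For $\sigma,\tau\in G$, we have $(\sigma\tau)(f)=\sigma(\tau(f))$ for every $f\in K[X]$, so $(\sigma\tau)(P_i)=\{(\sigma\tau)(f)\mid f\in P_i\}=\{\sigma(\tau(f))\mid f\in P_i\}=\sigma(\tau(P_i))$. Together with (a), this shows that the assignment $(\sigma,P_i)\mapsto \sigma(P_i)$ is a well-defined action of $G$ on the finite set $\Ass(I)$.

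The only subtle point — and thus the ``main obstacle'' worth flagging — is the invocation of the uniqueness of $\Ass(I)$: Lemma~\ref{lem:minsym} only tells us that $\sigma(\mathcal{Q})$ is some minimal primary decomposition, not that its radicals coincide with those of $\mathcal{Q}$. The independence of $\Ass(I)$ from the chosen minimal decomposition closes this gap, and this is a standard fact from commutative algebra that the paper tacitly uses when defining $\Ass(I)$.
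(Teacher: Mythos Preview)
Your proof is correct and follows essentially the same route as the paper: reduce to showing $\sigma(P_i)\in\Ass(I)$, apply Lemma~\ref{lem:minsym} to a minimal primary decomposition, and use the uniqueness of $\Ass(I)$ together with $\sigma(I)=I$. The paper phrases the key step as $\sigma(P_i)\in\Ass(\sigma(I))=\Ass(I)$ and omits the verification of the group action axioms (b), but the substance is identical.
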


\begin{proof}
    It is enough to show that $\sigma(P_i)\in \Ass(I)$ for any $\sigma\in G$. For a minimal primary decomposition $\{Q_1,\ldots,Q_k\}$ of $I$, $\{\sigma(Q_1),\ldots,\sigma(Q_k)\}$ is also a minimal primary decomposition of $\sigma(I)$ by Lemma~\ref{lem:minsym} and thus $\sigma(P_i)\in \Ass (\sigma(I))$. Since $\sigma(I)=I$ for any $\sigma\in G$, we obtain $\sigma(P_i)\in \Ass (\sigma(I))=\Ass (I)$. 
\end{proof}

\begin{Example}
    Let $I=\langle x_1x_2,x_2x_3,x_3x_1\rangle\subset \mathbb{Q}[x_1,x_2,x_3]$ and $G=\langle\langle (1\;2\;3)\rangle\rangle$. Then, $I$ is $G$-invariant and  $I=\langle x_1,x_2\rangle\cap \langle x_2,x_3\rangle\cap \langle x_3,x_1\rangle$ i.e. $\Ass (I)=\{\langle x_1,x_2\rangle,\langle x_2,x_3\rangle,\langle x_3,x_1\rangle\}=\{P_1,P_2,P_3\}$. Here, for $\sigma=(1\;2\;3)$, we obtain that $\sigma(P_1)=P_2$, $\sigma(P_2)=P_3$ and $\sigma(P_3)=P_1$. 
\end{Example}

\begin{Remark} \label{rem:pri-nonuni}
    In general, $\mathcal{Q}=\mathcal{\sigma (Q)}$ is not always true. For example, $I=\langle x_1+x_2 \rangle\cap \langle x_2^3+x_2+1,(x_1+x_2)^2 \rangle\cap \langle x_2^3+x_2-1,(x_1+x_2)^2\rangle$ is a symmetric ideal and has a minimal primary decomposition $\mathcal{Q}=\{\langle x_1+x_2 \rangle,\langle x_2^3+x_2+1,(x_1+x_2)^2 \rangle, \langle x_2^3+x_2-1,(x_1+x_2)^2\rangle\}$. However, $\mathcal{Q}\neq \sigma(\mathcal{Q})$ for $\sigma = (1\,2)$. Indeed, $\sigma(\mathcal{Q})=\{\langle x_2+x_1 \rangle,\langle x_1^3+x_1+1,(x_2+x_1)^2 \rangle, \langle x_1^3+x_1-1,(x_2+x_1)^2\rangle\}$ and $\langle x_1^3+x_1+1,(x_2+x_1)^2 \rangle\in \sigma(\mathcal{Q})$ is not in $\mathcal{Q}$. 
\end{Remark}

In Remark~\ref{rem:pri-nonuni}, we see that $G$ does not always act on a primary decomposition of a $G$-invariant ideal $I$. Thus, we extend the notion of the primary decomposition as follows. 

\begin{Definition}
    Let $P$ be a prime divisor of $I$. We call the set of all $P$-primary components of $I$ {\em the class of a ($P$-)primary component} of $I$ and denote it by $\mathcal{Q}_P[I]$. We call the set of all classes of primary components $\{\mathcal{Q}_P[I]\mid P\in \Ass(I)\}$ {\em the quotient set of primary components} of $I$ and denote it by $\mathcal{Q} [I]$. 
\end{Definition}

\begin{Example}
    Let $I=\langle x_1^2,x_1x_2\rangle\subset \mathbb{Q}[x_1,x_2]$. Then, $\Ass (I)=\{\langle x_1\rangle,\langle x_1,x_2\rangle\}=\{P_1,P_2\}$. Here, $\mathcal{Q}_{P_1}[I]=\{\langle x_1\rangle\}$ and $\mathcal{Q}_{P_2}[I]$ contains $\langle x_1,x_1x_2,x_2^m\rangle$. In general, it is very difficult to express the elements of $\mathcal{Q}_{P}[I]$ explicitly. However, it is enough to know just one element of $\mathcal{Q}_{P_i}[I]$ for each $i$ for computing a primary decomposition of $I$ by Proposition~\ref{prop:29}. 
\end{Example}

Regardless of the choice of each primary component $Q_P$ of $\mathcal{Q}_{P}[I]$, the set $\{Q_P\mid P\in \Ass (I)\}$ is a minimal primary decomposition. The following proposition is proved directly from Proposition~2.9 in \cite{Ishihara2022}. 

\begin{Proposition}[\cite{Ishihara2022}, Proposition~2.9] \label{prop:29}
    Let $\mathcal{Q}[I]=\{\mathcal{Q}_{P_1}[I],\ldots,\mathcal{Q}_{P_k}[I]\}$ and $Q_i$ an element of $\mathcal{Q}_{P_i}[I]$ for each $i\in \{1,\ldots,k\}$. Then, $\{Q_1,\ldots,Q_k\}$ is a minimal primary decomposition of $I$. 
\end{Proposition}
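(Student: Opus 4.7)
The plan is to build the decomposition $\{Q_1,\ldots,Q_k\}$ by starting from any fixed minimal primary decomposition $\mathcal{D}_0=\{Q_1^0,\ldots,Q_k^0\}$ of $I$ (with $\sqrt{Q_i^0}=P_i$) and swapping components one prime at a time, invoking the exchange result of \cite{Ishihara2022} at each step. By definition of $\mathcal{Q}_{P_i}[I]$, each prescribed $Q_i$ appears as the $P_i$-primary component of some minimal primary decomposition $\mathcal{D}_i$ of $I$. Hence the claim is equivalent to saying that the family of minimal primary decompositions is closed under independent replacement of the $P_i$-primary slot; once this closure is established, performing $k$ successive one-slot swaps starting from $\mathcal{D}_0$ produces exactly $\{Q_1,\ldots,Q_k\}$.

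The core step to verify is the one-slot exchange: if $\mathcal{D}=\{Q_1,R_2,\ldots,R_k\}$ and $\mathcal{D}'=\{Q_1',R_2,\ldots,R_k\}$ (differing only in the $P_1$-primary slot) are to both be minimal primary decompositions, then $Q_1\cap R_2\cap\cdots\cap R_k = Q_1'\cap R_2\cap\cdots\cap R_k$. For isolated $P_1$ this is trivial because $\mathcal{Q}_{P_1}[I]$ is a singleton, realized as $IK[X]_{P_1}\cap K[X]$. For embedded $P_1$, I would appeal to the exchange statement of \cite[Proposition~2.9]{Ishihara2022}, which is precisely the assertion that any $P$-primary component appearing in some minimal primary decomposition can be substituted for the $P$-primary component of any other minimal primary decomposition. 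Iterating the exchange across $i=1,\ldots,k$ yields $I=Q_1\cap\cdots\cap Q_k$.

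Once the intersection equality is in hand, minimality follows with little work. The radicals $\sqrt{Q_i}=P_i$ are pairwise distinct by hypothesis since $\{P_1,\ldots,P_k\}=\Ass(I)$. For non-redundancy, if $\bigcap_{j\neq i}Q_j\subseteq Q_i$ held, then intersecting with $Q_i$ would leave $I$ expressible as $\bigcap_{j\neq i}Q_j$, contradicting the fact that $P_i$ is an associated prime of $I$ (which must appear as the radical of some component in every minimal primary decomposition). The main obstacle in this plan is the embedded-prime exchange step; everything else is bookkeeping. Since the author explicitly states that the proposition follows directly from \cite[Proposition~2.9]{Ishihara2022}, my proof would consist of a brief reduction to that result together with the minimality verification sketched above.
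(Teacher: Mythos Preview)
The paper does not give its own proof of this proposition; it simply attributes the statement to \cite[Proposition~2.9]{Ishihara2022} and remarks that it ``is proved directly from'' that result. Your proposal is therefore aligned with the paper's treatment: you too defer the substantive content to \cite[Proposition~2.9]{Ishihara2022}, and the iterative one-slot exchange together with the easy minimality check you outline is a correct way to unpack that citation. One small caveat: the ``exchange statement'' you isolate (swap a single $P$-primary component between two minimal decompositions) is not a strictly weaker lemma than the proposition itself---the two are trivially equivalent---so your reduction is more a reformulation than an independent argument, but that is exactly what the paper intends by ``proved directly from.''
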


Finally we obtain the action on the quotient set of primary components of $I$. 

\begin{Theorem}
    Let $I$ be a $G$-invariant ideal. Then, $G$ acts on $\mathcal{Q}[I]$ by $\sigma(\mathcal{Q}_P[I])$ for $\sigma\in G$ and $\mathcal{Q}_P[I]\in \mathcal{Q}[I]$. 
\end{Theorem}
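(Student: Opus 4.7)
The plan is to verify two things: first, that $\sigma(\mathcal{Q}_P[I])$ is actually a member of $\mathcal{Q}[I]$, which amounts to the identification $\sigma(\mathcal{Q}_P[I]) = \mathcal{Q}_{\sigma(P)}[I]$; and second, that the assignment $\sigma \cdot \mathcal{Q}_P[I] := \sigma(\mathcal{Q}_P[I])$ satisfies the group action axioms.

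First, I would fix $P \in \Ass(I)$ and $\sigma \in G$, and unfold the definition: $\mathcal{Q}_P[I]$ is the set of all $P$-primary components of $I$, meaning all $P$-primary ideals $Q$ that appear in at least one minimal primary decomposition of $I$. Given such a $Q$, pick a minimal decomposition $\{Q, Q_2, \ldots, Q_k\}$ of $I$ containing it. By Lemma~\ref{lem:minsym} applied to the $G$-invariant ideal $I$, the set $\{\sigma(Q), \sigma(Q_2), \ldots, \sigma(Q_k)\}$ is again a minimal primary decomposition of $I$. Since $\sigma(Q)$ is $\sigma(P)$-primary (because $\sqrt{\sigma(Q)} = \sigma(\sqrt{Q}) = \sigma(P)$ and $\sigma(P) \in \Ass(I)$ by Proposition~\ref{prop:assact}), it follows that $\sigma(Q) \in \mathcal{Q}_{\sigma(P)}[I]$. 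This gives $\sigma(\mathcal{Q}_P[I]) \subseteq \mathcal{Q}_{\sigma(P)}[I]$.

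For the reverse containment, I would apply exactly the same argument with $\sigma$ replaced by $\sigma^{-1}$ and $P$ replaced by $\sigma(P)$: any $Q' \in \mathcal{Q}_{\sigma(P)}[I]$ satisfies $\sigma^{-1}(Q') \in \mathcal{Q}_{\sigma^{-1}(\sigma(P))}[I] = \mathcal{Q}_P[I]$, so $Q' = \sigma(\sigma^{-1}(Q')) \in \sigma(\mathcal{Q}_P[I])$. Combining the two inclusions yields $\sigma(\mathcal{Q}_P[I]) = \mathcal{Q}_{\sigma(P)}[I] \in \mathcal{Q}[I]$, so the operation is well-defined on $\mathcal{Q}[I]$.

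Finally, I would check the group action axioms. Since the identity permutation acts as the identity on $K[X]$, it fixes every ideal and hence fixes each $\mathcal{Q}_P[I]$ setwise. For composition, given $\sigma, \tau \in G$ and any $Q \in \mathcal{Q}_P[I]$, we have $(\sigma\tau)(Q) = \sigma(\tau(Q))$ as ideals, so $(\sigma\tau)(\mathcal{Q}_P[I]) = \sigma(\tau(\mathcal{Q}_P[I]))$ as sets. The main conceptual point — and what I expect to be the only delicate step — is the well-definedness: because $\mathcal{Q}_P[I]$ is defined not from a single decomposition but as a union over all minimal decompositions, one must be careful that $\sigma$ sweeps through \emph{all} $P$-primary components and lands on \emph{all} $\sigma(P)$-primary components, which is exactly what the inverse argument above secures.
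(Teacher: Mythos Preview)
Your proof is correct and follows essentially the same route as the paper's: both identify $\sigma(\mathcal{Q}_P[I])$ with $\mathcal{Q}_{\sigma(P)}[I]$ by showing that $\sigma$ carries $P$-primary components of $I$ to $\sigma(P)$-primary components, invoking Proposition~\ref{prop:assact} (and, in your case, Lemma~\ref{lem:minsym}) for this. Your argument is simply more explicit than the paper's, supplying the reverse inclusion via $\sigma^{-1}$ and verifying the group action axioms, both of which the paper leaves implicit.
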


\begin{proof}
    Let $P$ be a prime divisor and $Q$ a $P$-primary component of $I$. Then, $\sigma (P)$ is a prime divisor and $\sigma (Q)$ is a $\sigma(P)$-primary component of $I$ by Proposition~\ref{prop:assact}. Thus, $\sigma(\mathcal{Q}_P[I])=\mathcal{Q}_{\sigma(P)}[I]\in \mathcal{Q}[I]$. 
\end{proof}

\subsection{Algorithm for Primary Decomposition of Symmetric Ideals}
Here, we devise an effective algorithm specialized to symmetric ideals. For a $G$-invariant ideal, we only need to compute $l$ primary components where $l$ is the number of the {\em orbit decomposition} of $I$ with respect to $G$ in Definition~\ref{def:orbit}. In particular, if $G$ acts on $\Ass (I)$ transitively, then we can compute a minimal primary decomposition from just one primary component of $I$ (see Example \ref{ex:cyc}). 

\begin{Definition}\label{def:orbit}
    Let $\sim$ be an equivalent relationship between $\mathcal{Q}[I]$ defined by $\mathcal{Q}_{P_i}[I]\sim \mathcal{Q}_{P_j}[I]$ if and only if $\sigma(\mathcal{Q}_{P_i}[I])=\mathcal{Q}_{P_j}[I]$ for some $\sigma\in G$. We call the set $\{C_1,\ldots,C_l\}$ of all equivalent classes of $\mathcal{Q}[I]$ with respect to $\sim$ {\em the orbit decomposition of $I$ with respect to $G$}. 
\end{Definition}

\begin{Example}
    Let $I=\langle (x_1+1)(x_2+1)(x_1+x_2)\rangle\subset \mathbb{Q}[x_1,x_2]$. Then, $I$ is a symmetric ideal and $\Ass (I)=\{\langle x_1+1\rangle,\langle x_2+1\rangle,\langle x_1+x_2\rangle\}=\{P_1,P_2,P_3\}$. Then, for $C_1=\{\mathcal{Q}_{P_1}[I],\mathcal{Q}_{P_2}[I]\}$ and $C_2=\{\mathcal{Q}_{P_3}[I]\}$, it follows that $\{C_1,C_2\}$ is the orbit decomposition of $I$ with respect to $\mathfrak{S}_2$. 
\end{Example}

In each orbit $C_i$, one can compute other classes of primary components from one class of a primary component $\mathcal{Q}_{P_i}[I]$ in $C_i$ since $C_i=\{\sigma(\mathcal{Q}_{P_i}[I])\mid \sigma\in G\}$. Hence, we can compute a minimal primary decomposition from $l$-primary components $Q_1,\ldots,Q_l$, where each $Q_i$ is in $\mathcal{Q}_{P_i}[I]$ respectively. Here, Algorithm~\ref{alg:2} is an outline of an algorithm for primary decompositions of symmetric ideals. 

\begin{algorithm}[H]                      
\caption{\textsc{SymmetricPrimaryDecomposition}}                                 
\begin{algorithmic}        \label{alg:2} 
\REQUIRE $I$: a $G$-invariant ideal of $K[X]$. $G$: a subgroup of $\mathfrak{S}_n$
\ENSURE a minimal primary decomposition of $I$
\STATE $\mathcal{Q}=\{\}$
\WHILE{$\mathcal{Q}$ is not a minimal primary decomposition of $I$}
    \STATE Compute a primary component $Q$ of $I$, whose radical $\sqrt{Q}$ is not in $\{\sqrt{Q^\prime}\mid Q^\prime\in \mathcal{Q}\}$
    \STATE $\mathcal{Q}\gets \mathcal{Q}\cup \{\sigma (Q)\mid \sigma\in G\}$
\ENDWHILE
\RETURN $\mathcal{Q}$
\end{algorithmic}
\end{algorithm}

\begin{Example} \label{ex:cyc}
    	Let $I=cyclic(3)=\langle x_1x_2x_3-1,x_1x_2+x_2x_3+x_3x_1,x_1+x_2+x_3 \rangle\subset \mathbb{Q}[x_1,x_2,x_3]$ (see \cite{Backelin1991} for the definition of $cyclic(n)$). Then, $I$ is a symmetric ideal. By computing a Gr\"obner basis of $I$, it follows that 
		\[
		I\cap \mathbb{Q}[x_3]=\langle x_3^3-1 \rangle=\langle x_3-1 \rangle\cap \langle x_3^2+x_3+1 \rangle. 
		\]		
	    Then, $I=(I+\langle x_3-1 \rangle)\cap (I+\langle x_3^2+x_3+1 \rangle)$ and we obtain a primary component
		\[
		Q_1=(I+\langle x_3-1 \rangle)=\langle  x_2^2+x_2+1,x_1+x_2+1,x_3-1\rangle
		\]
       By acting $G$ on $Q_1$, we obtain other primary components
		\begin{align*}
			Q_2=(1\, 2\, 3) (Q_1)=\langle x_3^2+x_3+1,x_2+x_3+1,x_1-1 \rangle		\\
			Q_3=(1\, 3\, 2) (Q_1)=\langle x_1^2+x_1+1,x_3+x_1+1,x_2-1 \rangle
		\end{align*}
        and a minimal primary decomposition
		\[
		I=Q_1\cap Q_2\cap Q_3. 
		\]
\end{Example}

%===============================
\section{Improvements for Symmetric Primary Decomposition}
\label{sec:ImprovementsforSymmetricPrimaryDecomposition}
%===============================

In this section, we devise practical techniques and propose several improvements for symmetric primary decomposition. Here, we modify Shimoyama-Yokoyama Algorithm (SY-Algorithm) \cite{SHIMOYAMA1996247}, one of the effective algorithms for primary decomposition, by specializing in symmetric ideals. After a brief review of the SY-algorithm, we introduce a symmetric ideal version of the SY-algorithm.

\subsection{Outline of Shimoyama-Yokoyama Algorithm}

First, we recall an outline of SY-algorithm. For an ideal $I$, SY-algorithm uses the prime decomposition of $\sqrt{I}$ to compute {\em the pseudo-primary decomposition} of $I$. In more detail, it utilizes {\em separators} to compute {\em pseudo-primary components}. An ideal $I$ is called a {\em pseudo-primary} ideal if $\sqrt{I}$ is a prime ideal (see Definition~2.3 in \cite{SHIMOYAMA1996247}). 

\begin{Definition}[\cite{SHIMOYAMA1996247}, Definition~2.5 and Definition~2.8]
		Let $I$ be an ideal, which is not a pseudo-primary ideal, $P_1,\ldots,P_k$ all isolated prime divisors of $I$, and $S_1,\ldots,S_r$ are finite subsets in $K[X]$.  Each $S_i$ is called a {\em separator} of $I$ with respect to $P_i$ if they satisfy the following conditions;
		\[
		S_i\cap P_i=\emptyset,  \text{ and } S_i\cap P_j\neq \emptyset \text{ for } i\neq j.
		\] 
		A set of separators $\{S_1,\ldots,S_r\}$ is called a {\em system of separators} of $I$. For a separator $S_i$ of $I$ with respect to $P_i$ and $s_i=\prod_{s\in S_i}s$, we say that $\overline{Q}_i=I:s_i^\infty$ is a {\em $P$-pseudo-primary component} of $I$. Also, there exists an ideal $I^\prime$ of $K[X]$ such that 
  \[
  I=\overline{Q}_1\cap \cdots \cap \overline{Q}_r \cap I^\prime
  \]
  and this decomposition is called {\em a pseudo-primary decomposition} of $I$. Here, $I^\prime$ is called {\em the remaining component} of the pseudo-primary decomposition of $I$. 
\end{Definition}

For a pseudo-primary component $\overline{Q}_i$ of $I$, we can compute the isolated primary component $Q_i$ of $\overline{Q}_i$ (i.e. that of $I$) and an ideal $Q^{\prime}_i$ such that $\overline{Q}_1=Q_i\cap Q^{\prime}_i$ by a maximal independent set of $\overline{Q}_i$ (see Procedure 3.3 in \cite{SHIMOYAMA1996247}). Thus, applying the pseudo-primary decomposition recursively for $Q^{\prime}_i$ and $I^\prime$, we obtain a primary decomposition $\mathcal{Q}$ of $I$. However, $\mathcal{Q}$ may have an unnecessary component i.e. $\mathcal{Q}$ is not necessarily minimal. In order to solve this problem, we use ``saturated separating ideal'' proposed in \cite{KAWAZOE20111158}. 

\begin{Definition}[\cite{KAWAZOE20111158}, Definition~1]
    Let $I$ and $Q$ be ideals satisfying $I \subset Q$. An ideal $J$ is called {\em a separating ideal} for $(I, Q )$ if $I=Q\cap (I+J)$ holds. If a separating ideal for $(I,Q)$ satisfies $\sqrt{I:Q}=\sqrt{I+J}$ then $J$ is called {\em a saturated separating ideal} for $(I , Q )$.
\end{Definition} 

The following proposition states that every isolated primary component of $I+J$ is a primary component of $I$ for a saturated separating ideal $J$ for $(I,Q)$. 

\begin{Proposition}[\cite{KAWAZOE20111158}, Theorem~7] \label{prop:thm7}
    Suppose that $I = Q\cap J$ and $\sqrt{J} =\sqrt{I:Q}$ for a proper ideal $J$. Let $Q_1.\ldots,Q_r$ be the set of all isolated primary components of $J$ and set $Q^\prime=Q\cap \bigcap_{i=1}^r Q_i$. If $I=Q^\prime\cap J^\prime$ and $\sqrt{J^\prime}=\sqrt{I:Q^\prime}$ for a proper ideal $J^\prime$, then any minimal associated prime of $J^\prime$ is a non-minimal associated prime of $J$. 
\end{Proposition}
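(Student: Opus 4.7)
The plan is to pick an arbitrary minimal associated prime $P$ of $J'$ and show $P\in\Ass(J)\setminus\{P_1,\ldots,P_r\}$. The strategy is to build a specific minimal primary decomposition of $I$ from given ones of $Q$ and $J$, track which primary components of $I$ contain $Q'$, and then match against $\sqrt{J'}=\sqrt{I:Q'}$.

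First I would show $P\in\Ass(I)$. Since $P$ is minimal over $J'$ and $\sqrt{J'}=\sqrt{I:Q'}$, $P$ is a minimal (hence associated) prime of $I:Q'$, so $P\in\Ass(K[X]/(I:Q'))$. Taking a finite generating set $g_1,\ldots,g_m$ of $Q'$, the injection $K[X]/(I:Q')\hookrightarrow\bigoplus_j K[X]/(I:g_j)$ combined with the multiplication-by-$g_j$ embeddings $K[X]/(I:g_j)\hookrightarrow K[X]/I$ yields the standard inclusion $\Ass(K[X]/(I:Q'))\subseteq\Ass(K[X]/I)$, so $P\in\Ass(I)$.

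Next I would fix minimal primary decompositions $Q=\bigcap_\alpha\tilde Q_\alpha$ with radicals $\tilde P_\alpha$ and $J=Q_1\cap\cdots\cap Q_r\cap Q_{r+1}\cap\cdots\cap Q_s$, where the first $r$ are isolated, and build a minimal primary decomposition of $I$ by intersecting these and merging components with the same radical. For each $P^{*}\in\Ass(I)$, the resulting $P^{*}$-primary component $Q^{*}_{P^{*}}$ is either $\tilde Q_{P^{*}}$, or $Q_j$ with $\sqrt{Q_j}=P^{*}$, or their intersection, according to whether $P^{*}$ is in $\Ass(Q)$, $\Ass(J)$, or both. Since $Q'=Q\cap Q_1\cap\cdots\cap Q_r$ is contained in $Q$ (hence in every $\tilde Q_\alpha$) and in each $Q_i$ for $i\le r$, a short case check gives $Q'\subseteq Q^{*}_{P^{*}}$ whenever $P^{*}$ is a minimal prime of $J$ or lies in $\Ass(Q)\setminus\Ass(J)$. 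Equivalently, $Q'\not\subseteq Q^{*}_{P^{*}}$ can happen only when $P^{*}$ is an embedded prime of $J$.

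Finally, from $I:Q'=\bigcap_{P^{*}\in\Ass(I)}(Q^{*}_{P^{*}}:Q')$ together with $(Q^{*}_{P^{*}}:Q')=K[X]$ whenever $Q'\subseteq Q^{*}_{P^{*}}$, one reads off $\sqrt{I:Q'}=\bigcap\{P^{*}:P^{*}\text{ embedded in }J,\ Q'\not\subseteq Q^{*}_{P^{*}}\}$. Since $P$ is minimal over $I:Q'$, $P$ must coincide with one of the primes occurring in this intersection, so $P\in\Ass(J)\setminus\{P_1,\ldots,P_r\}$, which is the desired conclusion. The main technical obstacle is the non-uniqueness of embedded primary components of $I$: the $Q^{*}_{P^{*}}$ produced by the construction above are not canonical. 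I expect this to be handled by observing that one only needs a single minimal primary decomposition of $I$ exhibiting the stated structure, while the final conclusion is phrased in terms of the intrinsic invariant $\Ass(J)$.
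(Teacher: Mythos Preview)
The paper does not supply its own proof of this proposition: it is quoted verbatim as Theorem~7 of \cite{KAWAZOE20111158} and used as a black box, so there is no in-paper argument to compare against.

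On its own merits your argument is sound. The key steps---(i) $\Ass(I:Q')\subseteq\Ass(I)$ via the embedding $K[X]/(I:Q')\hookrightarrow\bigoplus_j K[X]/(I:g_j)\hookrightarrow (K[X]/I)^m$, (ii) building a primary decomposition of $I$ from those of $Q$ and $J$ and checking case by case that $Q'\subseteq Q^{*}_{P^{*}}$ unless $P^{*}$ is embedded in $J$, and (iii) reading off the minimal primes of $I:Q'$ from $I:Q'=\bigcap_{P^{*}}(Q^{*}_{P^{*}}:Q')$---are all correct. Two small remarks: first, step~(i) is not actually needed, since step~(iii) already forces $P$ to equal one of the primes $P^{*}$ in the set $S=\{P^{*}:Q'\not\subseteq Q^{*}_{P^{*}}\}$, and these lie in $\Ass(J)$ by your case analysis; second, you do not need to pass to a \emph{minimal} primary decomposition of $I$ at all---the colon computation in~(iii) works for any primary decomposition, so the worry you raise about non-uniqueness of embedded components is a non-issue. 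With those simplifications the argument is clean and complete.
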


The existence of a saturated separating ideal for $(I,Q)$ is ensured by the following proposition. 

\begin{Proposition}[\cite{KAWAZOE20111158}, Theorem~4] \label{lem:cor5}
    Let $J$ be a separating ideal for $(I,Q)$. If $f\in \sqrt{I:Q}$ then there exists a positive integer $m$ satisfying $I=Q\cap (I+J+\langle f^m\rangle)$. 
\end{Proposition}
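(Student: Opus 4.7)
The inclusion $I\subset Q\cap(I+J+\langle f^m\rangle)$ is automatic for every $m$, so the content of the proposition is to produce an $m$ for which the reverse inclusion holds. My plan is to reduce to the Noetherian quotient $\bar R := K[X]/(I+J)$ and establish that $\bar Q\cap\langle\bar f^m\rangle=0$ there for all sufficiently large $m$, where $\bar Q,\bar f$ denote the images of $Q,f$. Once this is in hand, an arbitrary $g\in Q\cap(I+J+\langle f^m\rangle)$ has image $\bar g\in\bar Q\cap\langle\bar f^m\rangle=0$, forcing $g\in I+J$; combined with $g\in Q$ and the separating hypothesis $I=Q\cap(I+J)$, this yields $g\in I$ as required.

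To construct such an $m$, I would first unpack the hypothesis $f\in\sqrt{I:Q}$ into the statement $f^N Q\subset I$ for some integer $N$; since $I\subset I+J$, this reads as $\bar f^N\bar Q=0$ in $\bar R$. I then plan to invoke the Artin--Rees lemma applied to the submodule $\bar Q\subset\bar R$ and the principal ideal $\langle\bar f\rangle$: it produces an integer $k$ such that $\langle\bar f^m\rangle\cap\bar Q=\bar f^{\,m-k}(\langle\bar f^k\rangle\cap\bar Q)$ for every $m\ge k$. Choosing $m\ge k+N$ then traps the intersection inside $\bar f^{\,m-k}\bar Q\subset\bar f^N\bar Q=0$, giving the required vanishing.

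The main obstacle is precisely this Artin--Rees step, i.e.\ turning the pointwise annihilation $\bar f^N\bar Q=0$ into the ideal-theoretic equality $\langle\bar f^m\rangle\cap\bar Q=0$. An equivalent but more hands-on route is to note that the ascending chain $\mathrm{Ann}_{\bar R}(\bar f^i)$ stabilizes by Noetherianity of $\bar R$, say at $K_0$, and then show directly that any $\bar g=\bar h\bar f^m$ in $\bar Q$ forces $\bar h\in\mathrm{Ann}_{\bar R}(\bar f^{K_0})$ and hence $\bar g=0$ whenever $m\ge K_0$. Apart from this commutative-algebra input, everything else is routine manipulation of the identity $I=Q\cap(I+J)$.
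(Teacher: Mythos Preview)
Your argument is correct. The paper itself does not supply a proof of this proposition; it merely cites it as Theorem~4 of Kawazoe--Noro, so there is no in-paper proof to compare against. Your Artin--Rees reduction in $\bar R=K[X]/(I+J)$ is a clean way to establish the result: from $f^NQ\subset I\subset I+J$ you get $\bar f^{\,N}\bar Q=0$, Artin--Rees then gives $\langle\bar f^{\,m}\rangle\cap\bar Q\subset\bar f^{\,m-k}\bar Q=0$ for $m\ge k+N$, and pulling back via the separating identity $I=Q\cap(I+J)$ finishes the job. The alternative annihilator-chain argument you sketch is equally valid once one notes that $\bar g=\bar h\bar f^{\,m}\in\bar Q$ forces $\bar f^{\,N+m}\bar h=0$, hence $\bar h\in\mathrm{Ann}(\bar f^{\,K_0})$ by stabilisation, and then $\bar g=0$ whenever $m\ge K_0$.
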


Here, Algorithm~\ref{alg:sy} is a derivative version of SY-algorithm, which outputs a minimal primary decomposition of a given ideal. 

\begin{algorithm}[H]      
\caption{\textsc{Shimoyama-Yokoyama (SY)}}                                 
\begin{algorithmic}[1]       \label{alg:sy}
\REQUIRE $I$: an ideal of $K[X]$
\ENSURE a minimal primary decomposition of $I$
\STATE $\mathcal{Q}=\{\}$
\STATE $P_1,\ldots,P_r\gets $ isolated prime divisors of $I$
\STATE $\{S_1,\ldots,S_r\}\gets $ a system of separators of $I$
\FOR{$i=1$ to $r$}
    \STATE $s_i\gets \prod_{s\in S_i}s$
    \STATE $\overline{Q}_i\gets I:s_i^\infty$
\ENDFOR
\FOR{$i=1$ to $r$}
        \STATE $Q\gets $ the isolated primary component of $\overline{Q}_i$
        \STATE $J_1\gets $ a saturated separating ideal for $(\overline{Q_i},Q)$
        \IF{$I+J_1\neq K[X]$}
         \STATE $\mathcal{Q}\gets \mathcal{Q}\cup \textsc{SY}(I+J_1)$
        \ENDIF
\ENDFOR
\STATE $J_2\gets $ a saturated separating ideal for $(I,\bigcap_{i=1}^r \overline{Q}_i)$
\IF{$I+J_2\neq K[X]$}
         \STATE $\mathcal{Q}\gets \mathcal{Q}\cup \textsc{SY}(I+J_2)$
\ENDIF
\RETURN $\mathcal{Q}$
\end{algorithmic}
\end{algorithm}

\subsection{Symmetric Shimoyama-Yokoyama Algorithm}

Here, we introduce an effective SY-algorithm specialized for symmetric ideals. For such specialization and {\em symmetric pseudo-primary decompositions}, we need to prove the computability of the followings:
\begin{enumerate}
    \item a symmetric system of separators (see Proposition~\ref{prop:symsep}),
    \item a symmetric saturated separating ideal (see Theorem~\ref{thm:symsatsep}).
\end{enumerate}
First, we show that there exists a symmetric system of separators as follows. 

\begin{Proposition}\label{prop:symsep}
    Let $I$ be a $G$-invariant ideal, which is not a pseudo-primary ideal. Let $P_1,\ldots,P_r$ be isolated prime divisors. There exists a system of separators $\{S_1,\ldots,S_r\}$ such that $G$ acts on $\{S_1,\ldots,S_r\}$. 
\end{Proposition}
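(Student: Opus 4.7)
The plan is to start from an arbitrary system of separators for $I$ and symmetrize it orbit by orbit under the $G$-action on the set $\{P_1,\ldots,P_r\}$ of isolated prime divisors, which is a $G$-set by Proposition~\ref{prop:assact}. Concretely, I would decompose this set into $G$-orbits $O_1,\ldots,O_m$, fix a representative $P_{i_t}$ in each orbit $O_t$, and write $H_t=\mathrm{Stab}_G(P_{i_t})$ for its stabilizer in $G$.

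Since distinct isolated primes satisfy $P_j\not\subset P_i$, standard prime avoidance produces some system of separators $\{T_1,\ldots,T_r\}$ of $I$. At each representative I would replace $T_{i_t}$ by its $H_t$-orbit
\[
T'_{i_t}=\bigcup_{h\in H_t}h(T_{i_t}),
\]
which is again a finite separator for $P_{i_t}$: each $h\in H_t$ fixes $P_{i_t}$, so $h(T_{i_t})\cap P_{i_t}=h(T_{i_t}\cap P_{i_t})=\emptyset$, and the inclusion $T_{i_t}\subset T'_{i_t}$ preserves the nonempty intersections with the other $P_l$. Then for each $P_j\in O_t$ I would pick any $\sigma_j\in G$ with $\sigma_j(P_{i_t})=P_j$ and define $S_j=\sigma_j(T'_{i_t})$.

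Two checks finish the argument. First, $S_j$ is a separator for $P_j$: applying $\sigma_j$ to the separator conditions for $T'_{i_t}$ gives $S_j\cap P_j=S_j\cap \sigma_j(P_{i_t})=\emptyset$ and $S_j\cap P_l\neq\emptyset$ for $l\neq j$, because $\sigma_j$ permutes $\{P_1,\ldots,P_r\}$. Second, for $\tau\in G$ with $\tau(P_j)=P_l$, the composition $\tau\sigma_j$ sends $P_{i_t}$ to $P_l$, so by the well-definedness of the construction we get $\tau(S_j)=(\tau\sigma_j)(T'_{i_t})=S_l$. Hence $G$ permutes $\{S_1,\ldots,S_r\}$.

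The step I expect to be the main obstacle is precisely the well-definedness of $S_j$, i.e.\ its independence from the choice of $\sigma_j$. Two candidates for $\sigma_j$ differ by an element of $H_t$, and this ambiguity is absorbed exactly because $T'_{i_t}$ was built to be $H_t$-invariant. Without this preparatory symmetrization by the stabilizer, a naive choice $S_j=\sigma_j(T_{i_t})$ would still be a separator for $P_j$ individually, but the collection $\{S_1,\ldots,S_r\}$ would not in general be closed under $G$; this is the one place where the argument is not purely formal.
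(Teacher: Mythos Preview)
Your argument is correct and follows the same overall strategy as the paper: decompose $\{P_1,\ldots,P_r\}$ into $G$-orbits, choose one separator per orbit representative, and transport it around the orbit via the $G$-action. The paper simply picks, for each representative $P_{i_k}$, an arbitrary separator $S_{i_k}$ and declares the collection $\{\sigma(S_{i_k})\mid \sigma\in G,\ k=1,\ldots,N\}$ to be the desired $G$-stable system, without addressing the well-definedness issue you single out (namely, that two $\sigma$'s sending $P_{i_k}$ to the same $P_j$ need not send $S_{i_k}$ to the same set). Your preliminary symmetrization $T'_{i_t}=\bigcup_{h\in H_t}h(T_{i_t})$ over the stabilizer is exactly the device that makes the assignment $P_j\mapsto S_j$ unambiguous and hence turns the $G$-action on $\{S_1,\ldots,S_r\}$ into a genuine permutation action compatible with the action on the primes; this is an improvement in rigor over the paper's sketch rather than a genuinely different route.
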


\begin{proof}
    Let $L=\{1,\ldots,r\}$ and $S_1$ be an arbitrary separator with respect to $P_1$. Then, $\sigma(S_1)$ is a separator with respect to $\sigma(P_1)$ for $\sigma\in G$. Indeed, $\sigma (S_1)\cap \sigma(P_1)=\sigma (S_1\cap P_1)=\emptyset$ and  $\sigma(S_1)\cap \sigma (P_j)=\sigma (S_1\cap P_j)\neq \emptyset$ for $j\neq 1$. Set $L_1=L\setminus \{i\mid P_i=\sigma (P_1) \text{ for some }\sigma\in G\}$. If $L_1\neq \emptyset$, then we pick $i_1\in L_1$ and let $S_{i_1}$ be an arbitrary separator with respect to $P_{i_1}$. Then, $\sigma(S_{i_1})$ is a separator with respect to $\sigma(P_{i_1})$ for $\sigma\in G$. Inductively, for a positive integer $j\ge 2$, we define $L_j=L_{j-1}\setminus \{i\mid P_i=\sigma (P_{i_{j-1}}) \text{ for some }\sigma\in G\}$ and pick $i_j\in L_j$ and a separator $S_{i_j}$ with respect to $P_{i_j}$ until $L_1\cup \cdots \cup L_N=L$ at some positive integer $N$. It follows that $\{\sigma(P_{i_k})\mid \sigma\in G,k=1,\ldots,N\}$ is a system of separators of $I$ and $G$ acts on it.  
\end{proof}

We call $\{S_1,\ldots,S_r\}$ in Proposition~\ref{prop:symsep} {\em a $G$-invariant system of separators} of $I$. For a symmetric ideal, its pseudo-primary component can be divided into two types as follows. 

\begin{Theorem} \label{thm:pseudosym}
    Let $P_1,\ldots,P_r$ be isolated prime divisor of a $G$-invariant ideal $I$ and $\{S_1,\ldots,S_r\}$ a $G$-invariant system of separators of $I$. Then, for each pseudo-primary component $\overline{Q}_i$ with respect to $S_i$, it satisfies either one of the following conditions. 
    \begin{enumerate}
        \item $\overline{Q}_i$ is $G$-invariant
        \item $\overline{Q}_i$ does not have any $G$-invariant primary components. 
    \end{enumerate}
\end{Theorem}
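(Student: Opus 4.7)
The plan is to dichotomize according to whether the orbit of $P_i$ under $G$ is trivial, showing that a trivial orbit forces condition~(1) and a non-trivial orbit forces condition~(2). The key preliminary observation is that because each $S_j$ in the chosen $G$-invariant system is a separator for a distinct isolated prime $P_j$, and because $\sigma(S_i)$ is automatically a separator for $\sigma(P_i)$ (as $\sigma(S_i)\cap \sigma(P_i)=\emptyset$ and $\sigma(S_i)\cap \sigma(P_k)\ne\emptyset$ for $k\ne i$), the two $G$-actions are compatible: for $\sigma\in G$, $\sigma(S_i)=S_j$ if and only if $\sigma(P_i)=P_j$.

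If $\sigma(P_i)=P_i$ for every $\sigma\in G$, then $\sigma(S_i)=S_i$ and hence $\sigma(s_i)=s_i$, so $\sigma(\overline{Q}_i)=\sigma(I):\sigma(s_i)^\infty=I:s_i^\infty=\overline{Q}_i$, yielding condition~(1). Suppose instead that some $\tau\in G$ satisfies $\tau(P_i)\ne P_i$, and assume for contradiction that $\overline{Q}_i$ has a $G$-invariant primary component $Q$. Then $P':=\sqrt{Q}$ is a $G$-invariant prime divisor of $\overline{Q}_i$, and in particular $P'\supseteq \sqrt{\overline{Q}_i}=P_i$. By the standard description of associated primes under saturation, the prime divisors of $\overline{Q}_i=I:s_i^\infty$ are exactly those prime divisors of $I$ that do not contain $s_i$; but since $S_i\cap P_k\ne\emptyset$ for every $k\ne i$, the product $s_i$ lies in $\bigcap_{k\ne i}P_k$, so any prime not containing $s_i$ cannot contain any $P_k$ with $k\ne i$. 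In particular $P'\not\supseteq P_k$ for all $k\ne i$. Applying $\tau$ to $P'\supseteq P_i$ and using $\tau(P')=P'$ yields $P'\supseteq \tau(P_i)=P_k$ for some $k\ne i$, a contradiction, so condition~(2) must hold.

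The hard part will be the second case: isolating the structural fact that the prime divisors of $\overline{Q}_i$ are exactly those prime divisors of $I$ avoiding $s_i$, and combining it with the separator condition $S_i\cap P_k\ne\emptyset$ to conclude that none of them can contain any other isolated prime $P_k$. Once that structural fact is in hand, the theorem reduces to a short group-theoretic argument exploiting the compatibility of the $G$-actions on $\{S_1,\ldots,S_r\}$ and on $\{P_1,\ldots,P_r\}$ established at the outset.
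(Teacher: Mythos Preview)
Your proof is correct and follows essentially the same plan as the paper's: split on whether the $G$-orbit of $P_i$ is trivial, with case~(1) following from $\sigma(S_i)=S_i$ (hence $\sigma(s_i)=s_i$) via the compatibility of the $G$-actions on $\{S_1,\dots,S_r\}$ and $\{P_1,\dots,P_r\}$. The only difference is in the execution of case~(2): the paper observes that $\sigma(\overline{Q}_i)=\overline{Q}_j$ for some $j\ne i$ and then asserts that $\overline{Q}_i$ and $\overline{Q}_j$ share no primary components (so $\sigma(Q)\ne Q$), whereas you work directly with $P'=\sqrt{Q}$ and use the saturation description of $\mathrm{Ass}(I:s_i^\infty)$ together with $s_i\in P_k$ for $k\ne i$ to derive the contradiction $P'\supseteq \tau(P_i)=P_k$. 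These two presentations encode the same underlying fact---every associated prime of $\overline{Q}_i$ contains $P_i$ but no other $P_k$---and your version has the modest advantage of making that disjointness explicit rather than leaving it implicit.
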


\begin{proof}
    Fix $i$. For $\sigma\in G$, $\sigma(\overline{Q}_i)=\sigma(I:s_i^\infty)=I:\sigma(s_i)^\infty$. Since $\{S_1,\ldots,S_r\}$ is a $G$-invariant system of separators of $I$, $\sigma(\overline{Q}_i)$ is also a pseudo-primary component of $I$. If $P_i$ is $G$-invariant, then $\overline{Q}_i$ is also $G$-invariant. Otherwise, there exists $j\neq i$ such that $\sigma(\overline{Q}_i)=\overline{Q}_j$. In this case, for any primary component $Q$ of $\overline{Q}_i$, $\sigma(Q)$ is a primary component of $\overline{Q}_j$. Since $\overline{Q}_i$ and $\overline{Q}_j$ do not have any common primary components, $Q\neq \sigma(Q)$. Therefore, $\overline{Q}_i$ does not have any $G$-invariant primary components. 
\end{proof}

Next, we can take a symmetric saturated separating ideal for $(I,Q)$ as follows. 

\begin{Theorem} \label{thm:symsatsep}
    Let $I$ and $Q$ be $G$-invariant ideals with $I\subset Q$. Let $J$ be a $G$-invariant ideal and a separating ideal for $(I,Q)$. There exist a non-negative integer $l$ and  $f_1,\ldots,f_l\in K[X]$ such that 
    \begin{enumerate}
        \item $I=Q\cap (I+J+\langle f_1,\ldots,f_l\rangle)$,
        \item $\langle f_1,\ldots,f_l\rangle$ is a $G$-invariant ideal,
        \item $J+\langle f_1,\ldots,f_l\rangle$ is a saturated separating ideal for $(I,Q)$. 
    \end{enumerate}
\end{Theorem}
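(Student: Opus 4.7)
The plan is to apply Proposition~\ref{lem:cor5} iteratively, while arranging the added generators to be $G$-symmetric by taking enough $G$-translates. First I would verify that $I:Q$, and hence $\sqrt{I:Q}$, is itself $G$-invariant: for $\sigma\in G$ and $f\in I:Q$ the relation $fQ\subseteq I$ yields $\sigma(f)\sigma(Q)=\sigma(fQ)\subseteq\sigma(I)=I$, so $\sigma(f)\in I:Q$. Since $K[X]$ is Noetherian, I would pick a finite generating set of $\sqrt{I:Q}$ and close it under $G$ to obtain a finite $G$-stable subset $\{h_1,\ldots,h_l\}\subseteq\sqrt{I:Q}$ with $\sqrt{\langle h_1,\ldots,h_l\rangle}=\sqrt{I:Q}$.

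Next, starting from $J_0:=J$, I would apply Proposition~\ref{lem:cor5} $l$ times: at step $j$ it produces a positive integer $m_j$ with $I=Q\cap(I+J_{j-1}+\langle h_j^{m_j}\rangle)$, and I set $J_j:=J_{j-1}+\langle h_j^{m_j}\rangle$. After $l$ iterations, $J+\langle h_1^{m_1},\ldots,h_l^{m_l}\rangle$ is a separating ideal for $(I,Q)$. To force $G$-invariance I would equalize the exponents by setting $M:=\max_j m_j$ and $f_j:=h_j^M$; since $G$ permutes $\{h_1,\ldots,h_l\}$ it also permutes $\{f_1,\ldots,f_l\}$, so $\langle f_1,\ldots,f_l\rangle$ is a $G$-invariant ideal, establishing (2). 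For (1), the containment $\langle f_1,\ldots,f_l\rangle\subseteq\langle h_1^{m_1},\ldots,h_l^{m_l}\rangle$ together with $I\subseteq Q$ sandwiches $I\subseteq Q\cap(I+J+\langle f_1,\ldots,f_l\rangle)\subseteq Q\cap(I+J+\langle h_1^{m_1},\ldots,h_l^{m_l}\rangle)=I$.

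For (3) I would verify $\sqrt{I+J+\langle f_1,\ldots,f_l\rangle}=\sqrt{I:Q}$. The inclusion $\supseteq$ is immediate from $\sqrt{\langle f_1,\ldots,f_l\rangle}=\sqrt{\langle h_1,\ldots,h_l\rangle}=\sqrt{I:Q}$. For the reverse inclusion both $I\subseteq I:Q$ and $\langle f_1,\ldots,f_l\rangle\subseteq\sqrt{I:Q}$ are evident; for $J$, taking $f\in J$ and $q\in Q$ one has $fq\in J\cap Q\subseteq(I+J)\cap Q=I$, so $J\subseteq I:Q$, completing the argument.

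The main obstacle I anticipate is the reconciliation of the differing exponents $m_j$ produced by the iteration, which a priori breaks the $G$-invariance of $\langle h_1^{m_1},\ldots,h_l^{m_l}\rangle$. The resolution is to replace each $h_j^{m_j}$ by the common higher power $h_j^M$; this preserves the separating property by monotonicity (the new ideal sits inside the previous one, pulling the intersection back down to $I$) and simultaneously restores $G$-invariance, because $G$ now acts on $\{f_1,\ldots,f_l\}$ simply by permuting generators.
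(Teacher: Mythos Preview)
Your proof is correct and rests on the same two ingredients as the paper's argument: iterated application of Proposition~\ref{lem:cor5} to adjoin powers of elements of $\sqrt{I:Q}$, followed by replacing the individual exponents $m_j$ by their maximum $M$ so that $G$ permutes the resulting generators. The one organisational difference is that the paper proceeds orbit by orbit---choosing some $g_1\in(I:Q)\setminus\sqrt{I+J}$, adjoining $\{\sigma(g_1)^{M_1}:\sigma\in G\}$, testing whether the result is saturated, and if not repeating with a new $g_2$, with the Noetherian property of $K[X]$ guaranteeing termination of this outer loop. You instead fix at the outset a finite $G$-stable generating set $\{h_1,\ldots,h_l\}$ of $\sqrt{I:Q}$ and adjoin suitable powers of all of them in a single pass; saturation then follows immediately from $\sqrt{\langle f_1,\ldots,f_l\rangle}=\sqrt{I:Q}$, and no outer Noetherian argument is needed. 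Your route is thus a mild streamlining of the paper's, trading the termination argument for an explicit choice of which elements to adjoin; the paper's version is closer to how one would implement the procedure incrementally.
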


\begin{proof}
    If $J$ is a saturated separating ideal for $(I,Q)$, then $l=0$ satisfies $(1)-(3)$. Suppose $J$ is not a saturated separating ideal for $(I,Q)$. Then we can take $g_1\in (I:Q)\setminus \sqrt{I+J}$. By Proposition~\ref{lem:cor5}, there exists a positive integer $m_1$ such that $I=Q\cap (I+J+\langle g_1^{m_1}\rangle)$. Since both $I:Q$ and $\sqrt{I+J}$ are $G$-invariant, $\sigma(g_1)\in (I:Q)\setminus \sqrt{I+J}$ for any $\sigma\in G$. Thus, there exists a positive integer $m_{\sigma_1}$ for some $\sigma_1\in G\setminus \{(1)\}$ such that $I=Q\cap (I+J+\langle g_1^{m_1},\sigma (g_1^{m_{\sigma_1}})\rangle)$. Repeatedly, there exist positive integers $m_{\sigma}$ $(\sigma\in G)$ such that $I=Q\cap (I+J+\langle \sigma(g_1^{m_{\sigma}})\mid \sigma \in G\rangle)$ with $m_{(1)}=m_1$. Letting $M_1=\max\{m_{\sigma}\mid \sigma\in G\}$, we obtain $I=Q\cap (I+J+\langle \sigma(g_1)^{M_1}\mid \sigma \in G\rangle)$. Here, $F_1=\{\sigma (g_1)^{M_1}\mid \sigma\in G\}$ satisfies $(1)$ and $(2)$. If $F_1$ does not satisfy $(3)$, then we can take $g_2\in (I:Q)\setminus \sqrt{I+J}$ and set $M_2$ repeatedly. As $K[X]$ is Noetherian, there exists a positive integer $N$ such that $J+\langle \sigma(g_1)^{M_1}\mid \sigma\in G \rangle+\cdots +\langle \sigma(g_N)^{M_N}\mid \sigma\in G \rangle$ is a saturated separating ideal for $(I,Q)$. 
\end{proof}

For an ideal $I$, we say that $I$ is {\em completely anti-symmetric} if $I$ does not have any symmetric primary components. In the symmetric pseudo-primary decomposition $I=\overline{Q}_1\cap \cdots \cap \overline{Q}_r \cap I^\prime$ of $I$, there are $3$-type of components by Theorem~\ref{thm:pseudosym} and Theorem~\ref{thm:symsatsep}:

\begin{enumerate}
    \item symmetric pseudo-primary component $\overline{Q}$; in this case we apply our symmetric decomposition to $\overline{Q}$ recursively.
    \item completely anti-symmetric pseudo-primary component $\overline{Q}$; in this case we apply an ordinary primary decomposition algorithm to $\overline{Q}$
    \item symmetric remaining component $I^\prime$; in this case we apply our symmetric decomposition to $I^\prime$ recursively.
\end{enumerate}

Finally, we obtain the symmetric Shimoyama-Yokoyama Algorithm as Algorithm~\ref{alg:symsy}. The following proposition ensures its termination and correctness. 

\begin{Proposition}
    Algorithm~\ref{alg:symsy} terminates in finitely many steps and outputs a minimal primary decomposition of $I$. 
\end{Proposition}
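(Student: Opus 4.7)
The proof will be by induction on a Noetherian monovariant, paralleling the correctness and termination argument of the standard SY algorithm (Algorithm~\ref{alg:sy}), with the $G$-invariant constructions substituted for their ordinary counterparts.

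For correctness, the plan is as follows. Using a $G$-invariant system of separators (Proposition~\ref{prop:symsep}), we obtain a pseudo-primary decomposition $I=\overline{Q}_1\cap\cdots\cap\overline{Q}_r\cap I^\prime$, and by Theorem~\ref{thm:pseudosym} each $\overline{Q}_i$ falls into exactly one of two classes. The $G$-invariant ones are fed recursively into the symmetric routine; the completely anti-symmetric ones (which occur in full $G$-orbits by Theorem~\ref{thm:pseudosym}) are handed to the ordinary SY algorithm and then closed under the $G$-action, as in Algorithm~\ref{alg:2}; and the remaining component $I^\prime$ is $G$-invariant, hence processed recursively. At each recursive step a $G$-invariant saturated separating ideal from Theorem~\ref{thm:symsatsep} is used, so that by Proposition~\ref{prop:thm7} the components produced are genuine primary components of $I$ with no duplication of associated primes. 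Collecting all outputs yields a primary decomposition of $I$, and the use of saturated separating ideals (combined with removing any $\sqrt{Q}$ already present) ensures minimality.

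For termination, the monovariant is again supplied by Proposition~\ref{prop:thm7}: each recursive call is performed on an ideal strictly larger than its predecessor, and the set of relevant minimal associated primes strictly decreases along the recursion. The Noetherian property of $K[X]$ prevents this chain from continuing indefinitely. Within a single call, the construction of a $G$-invariant saturated separating ideal (the inner loop implicit in the proof of Theorem~\ref{thm:symsatsep}) also terminates after finitely many enlargements by the ascending chain condition, and the closure step under $G$ is finite because $|G|\le n!$.

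The main obstacle I anticipate is the bookkeeping required to keep $G$-invariance alive throughout the recursion: one must verify that whenever the algorithm peels off a non-$G$-invariant pseudo-primary component $\overline{Q}_i$, it peels off the entire $G$-orbit of $\overline{Q}_i$ simultaneously, so that the residual component passed to the next recursive call remains $G$-invariant, and similarly that the saturated separating ideal used at that step is $G$-invariant (which is exactly the content of Theorem~\ref{thm:symsatsep}). Once this invariant is maintained, the inductive step reduces to the already-established termination and correctness of the ordinary SY algorithm together with Lemma~\ref{lem:minsym}, and the proof closes.
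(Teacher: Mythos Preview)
Your proposal is correct and follows essentially the same approach as the paper: termination is inherited from the standard Shimoyama--Yokoyama argument (the paper simply cites Theorem~3.2 of \cite{SHIMOYAMA1996247}), correctness is established by a case analysis showing that every ideal the algorithm outputs is a primary component of $I$ (the paper organizes this by the line of Algorithm~\ref{alg:symsy} that produced it, lines 11, 15, 19, 24, which matches your three cases plus the remaining component), and minimality is attributed to the use of $G$-invariant saturated separating ideals via Theorem~\ref{thm:symsatsep}. Your explicit remark about maintaining $G$-invariance through the recursion is a point the paper leaves implicit, but otherwise the two arguments coincide.
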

\begin{proof}
    The termination is proved in a similar way to that of Shimoyama-Yokoyama algorithm (Theorem~3.2 in \cite{SHIMOYAMA1996247}). Thus, it is enough to prove the correctness of Algorithm~\ref{alg:symsy}. Let $\mathcal{Q}$ be the output for $I$ and $G$. The element $Q$ of $\mathcal{Q}$ is in one of the following cases:
    \begin{enumerate}
        \item $Q$ is the isolated primary component of $Q_i$ (line 11)
        \item $Q$ is a primary component of $I+J_1$ (line 15)
        \item $Q$ is equal to $\sigma (Q^\prime)$ for a primary component $Q^\prime$ of $\overline{Q}_i$ and $\sigma\in G$ (line 19)
        \item $Q$ is a primary component of $I+J_2$ (line 24)
    \end{enumerate}

    In the case $(1)$, since $\overline{Q_i}$ is a pseudo primary component of $I$, $Q$ is an isolated primary component of $I$. In the case $(2)$, as $J_1$ is a $G$-invariant saturated separating ideal for $(\overline{Q_i},Q)$, $Q$ is a primary component of both $I$ and $I+J_1$. In the case $(3)$, since $I$ is $G$-invariant, $\sigma (Q^\prime)$ is a primary component of $I$. In the case $(4)$, as $J_2$ is a $G$-invariant saturated separating ideal for $(I,\bigcap_{i=1}^r \overline{Q}_i)$, $Q$ is a primary component of both $I$ and $I+J_2$. Therefore, $Q$ is a primary component in any cases and thus $\mathcal{Q}$ is a  primary decomposition of $I$. The minimality of $\mathcal{Q}$ follows from Theorem~\ref{thm:symsatsep}. 
\end{proof}

In Algorithm~\ref{alg:symsy}, \textsc{SY} is an ordinal primary decomposition algorithm (Algorithm~\ref{alg:sy}) based on Shimoyama-Yokoyama Algorithm. 

\begin{algorithm}[H]  
\caption{\textsc{SymmetricSY}}                                 
\begin{algorithmic}[1]       \label{alg:symsy}
\REQUIRE $I$: a $G$-invariant ideal of $K[X]$. $G$: a subgroup of $\mathfrak{S}_n$
\ENSURE a minimal primary decomposition of $I$
\STATE $\mathcal{Q}=\{\}$
\STATE $P_1,\ldots,P_r\gets $ isolated prime divisors of $I$
\STATE $\{S_1,\ldots,S_r\}\gets $ a $G$-invariant system of separators of $I$
\FOR{$i=1$ to $r$}
    \STATE $s_i\gets \prod_{s\in S_i}s$
    \STATE $\overline{Q}_i\gets I:s_i^\infty$
\ENDFOR
\STATE $C_1,\ldots,C_l\gets $ the orbit decomposition of $\{\overline{Q}_1,\ldots,\overline{Q}_r\}$ with respect to $G$
\FOR{$i=1$ to $l$}
    \IF{$|C_i|=1$}
        \STATE $Q\gets $ the isolated primary component of $\overline{Q_i}$ for $C_i=\{\overline{Q_i}\}$
        \STATE $\mathcal{Q}\gets \mathcal{Q}\cup \{Q\}$
        \STATE $J_1\gets $ a $G$-invariant saturated separating ideal for $(\overline{Q_i},Q)$
        \IF{$I+J_1\neq K[X]$}
         \STATE $\mathcal{Q}\gets \mathcal{Q}\cup \textsc{SymmetricSY}(I+J_1)$
         \ENDIF
    \ELSE
        \STATE Pick $\overline{Q}_i\in C_i$ and $\mathcal{Q}_i\gets \textsc{SY}(\overline{Q}_i)$
        \STATE $\mathcal{Q}\gets \mathcal{Q} \cup \bigcup_{\sigma\in G} \sigma(\mathcal{Q}_i)$
    \ENDIF
\ENDFOR
\STATE $J_2\gets $ a $G$-invariant saturated separating ideal for $(I,\bigcap_{i=1}^r \overline{Q}_i)$
\IF{$I+J_2\neq K[X]$}
         \STATE $\mathcal{Q}\gets \mathcal{Q}\cup \textsc{SymmetricSY}(I+J_2)$
\ENDIF
\RETURN $\mathcal{Q}$
\end{algorithmic}
\end{algorithm}

%===============================
\section{Experiment} 
\label{sec:Experiment}
%===============================

In this section, we examine the effectiveness of our algorithm in a naive computational experiment. We implement our algorithm in the computer algebra system Risa/Asir \cite{risaasir}. Here, the author implemented {\tt SY} and {\tt symSY} based on Algorithm~\ref{alg:sy} and Algorithm~\ref{alg:symsy} respectively. Both algorithms use the Risa/Asir package \texttt{noro\_pd} to compute ideal quotients, saturations, and prime decompositions of radical ideals. In order to measure the effect of using symmetry, each algorithm is implemented as simple as possible. Therefore, {\tt SY} and {\tt symSY} may be considered slower than the Shimoyama-Yokoyama algorithm already implemented in Risa/Asir. Timings in seconds are measured on a PC with AMD Ryzen Threadripper PRO 5965WX 24-Cores and 128GB memory. 

In Table 1, there are timings of \texttt{SY} and \texttt{symSY} in the ideals $I_1,\ldots,I_{10}$:
\begin{align*}
    I_1&=\langle (x_1+x_2)^3-1,x_1x_2(x_1+x_2)\rangle\subset \mathbb{Q}[x_1,x_2],\\
    I_2&=\langle \sigma(x_1^2x_2+x_1x_3)\mid \sigma\in \mathfrak{S}_3\rangle\subset \mathbb{Q}[x_1,x_2,x_3],\\
    I_3&=\langle \sigma(x_1^4x_2+x_1x_3)\mid \sigma\in \langle\langle (1\, 2\, 3)\rangle\rangle\rangle\subset \mathbb{Q}[x_1,x_2,x_3]\\ 
    &\text{ where $\langle\langle (1\, 2\, 3)\rangle\rangle=\{(1),(1\, 2\, 3),(1\, 3\, 2)\}$},\\
    I_4&=cyclic(4)=\langle c_1c_2c_3c_4-1,c_1c_2c_3+c_1c_2c_4+c_1c_3c_4+c_2c_3c_4,\\
    &c_1c_2+c_1c_3+c_1c_4+c_2c_3+c_2c_4+c_3c_4,c_1+c_2+c_3+c_4\rangle\subset \mathbb{Q}[c_1,c_2,c_3,c_4], \\
     I_5&=\langle (c_1c_2c_3c_4-1)^2,(c_1c_2c_3+c_1c_2c_4+c_1c_3c_4+c_2c_3c_4)^2,\\
    &(c_1c_2+c_1c_3+c_1c_4+c_2c_3+c_2c_4+c_3c_4)^2,(c_1+c_2+c_3+c_4)^2\rangle \subset \mathbb{Q}[c_1,c_2,c_3,c_4],\\
    I_6&=\bigcap_{\sigma\in \mathfrak{S}_4} \sigma(\langle x_1^3-1,x_2^2\rangle)\subset \mathbb{Q}[x_1,x_2,x_3,x_4],\\
    I_7&=\bigcap_{\sigma\in \mathfrak{S}_4} \sigma(\langle x_1x_2,x_3^2-x_4\rangle) \subset \mathbb{Q}[x_1,x_2,x_3,x_4], \\
     I_8&=\bigcap_{\sigma\in \mathfrak{S}_5} \sigma(\langle x_1x_2x_3,x_4^2+x_5^2\rangle)\subset \mathbb{Q}[x_1,x_2,x_3,x_4,x_5],\\
    I_9&=\bigcap_{\sigma\in \mathfrak{S}_5} \sigma(\langle x_1^2-1,x_2^3,x_3^4\rangle)\subset \mathbb{Q}[x_1,x_2,x_3,x_4,x_5],\\
    I_{10}&=\bigcap_{\sigma\in \mathfrak{S}_6} \sigma(\langle x_1^2-1,x_2^3,x_3^4\rangle)\subset \mathbb{Q}[x_1,x_2,x_3,x_4,x_5,x_6].
\end{align*}
An invariant group to each row ideal $I_i$ is in the second column. The cardinality of a minimal primary decomposition of each ideal is in the third column. Also, the number of the orbit decomposition $\{C_1,\ldots,C_l\}$ of each ideal is in the fourth column. We see that the symmetric SY-algorithm is effective for each $I_i$ compared to the ordinal SY-algorithm. While the computation time of \texttt{SY} increases as the number of primary components ($\# \mathcal{Q}$) increases, that of {\tt symSY} increases slowly since it essentially requires only $l$-primary components, where $l$ is the number of orbits ($\# Orbit$). For example, $I_9$ has $60$-primary components and {\tt SY} takes $266$ seconds for the computation. On the other hand, {\tt symSY} computes it much faster (in 7.62 seconds) since it computes $60$ primary components from $2$ primary components which are in each orbit by the group action of $\mathfrak{S}_5$. These results show that {\tt symSY} can be effective when the input ideal has many primary components and a highly symmetric structure.

	\begin{table}
 \label{table1}
		\begin{center}
\begin{tabular*}{10cm}{|c@{\extracolsep{\fill}}ccccc|D{.}{.}{-1}||D{.}{.}{-1}|}\hline
&ideal & $G$ & $\# \mathcal{Q}$ & $\# Orbit$ && \multicolumn{1}{c||}{{\tt SY}} & \multicolumn{1}{c|}{{\tt symSY}} \\\hline
&$I_1$ & $\mathfrak{S}_2$  & $4$ & $2$&& 0.01&0.01\\
&$I_2$ & $\mathfrak{S}_3$  &$7$ & $3$&& 0.03&0.03\\ 
&$I_3$ & $\langle\langle (1\, 2\, 3)\rangle\rangle$  &$15$ & $7$&& 0.10&0.09\\ 
&$I_4$ & $\mathfrak{S}_4$  &$8$ & $3$&& 0.07&0.05\\
&$I_5$ & $\mathfrak{S}_4$  &$8$ & $3$&& 0.65&0.40\\
&$I_6$ & $\mathfrak{S}_4$ &$24$&$2$&&2.68&0.48 \\
&$I_7$ & $\mathfrak{S}_4$ &$24$&$1$&&796&42.2 \\
&$I_8$ & $\mathfrak{S}_5$ &$30$&$1$&&91.7&3.58 \\
&$I_9$ & $\mathfrak{S}_5$ &$60$&$2$&&266&7.62 \\
&$I_{10}$ & $\mathfrak{S}_6$ &$120$&$2$&& \multicolumn{1}{c||}{$>$ 5 days}&6377 \\
\hline
\end{tabular*}			
		\end{center}
  \caption{Timings of \texttt{SY} and \texttt{symSY}}
	\end{table}	

%===============================
\section{Conclusions and Future Works}
\label{sec:ConclusionsandFutureWorks}
%===============================
Symmetric ideals appear in various areas of mathematics. In this paper, we prove good properties of symmetric ideals in order to provide an effective algorithm for a primary decomposition of such an ideal. In the proposed algorithm, one can compute full primary components from partial ones by the group action on the ideal. For practical computations, we devise Shimoyama-Yokoyama Algorithm specialized to symmetric ideals. In a computational experiment, we examine that the specialized algorithm is faster than the ordinal one in several cases. 

For future works, we plan to improve the algorithm and apply it to examples that appear in several areas e.g. statistics. For improvements,  we are thinking of devising a specialized algorithm of Kawazoe-Noro algorithm \cite{KAWAZOE20111158} to compute efficiently for more types of symmetric ideals. Also, we will consider generalized group actions, for example, the general linear group $GL(n,K)$ on $K[X]$ by $A( f(x_1,\ldots,x_n))=f(A(x_1,\ldots,x_n))$ for a matrix $A\in GL(n,K)$ and a polynomial $f\in K[X]$. 

% ---- Bibliography ----
%==========================
\section*{Acknowledgements}
%==========================
The author would like to thank Kazuhiro Yokoyama and Yuta Kambe for the fruitful discussion. He is grateful to the anonymous referee for helpful comments and suggestions. This work was supported by JSPS KAKENHI Grant Number JP22K13901.

\end{document}